\begin{document}

 \newtheorem{thm}{Theorem}[section]
 \newtheorem{cor}[thm]{Corollary}
 \newtheorem{lem}[thm]{Lemma}{\rm}
 \newtheorem{prop}[thm]{Proposition}

 \newtheorem{defn}[thm]{Definition}{\rm}
 \newtheorem{assumption}[thm]{Assumption}
 \newtheorem{rem}[thm]{Remark}
 \newtheorem{ex}{Example}\numberwithin{equation}{section}

\def\br{\bar{\rho}}
\def\la{\langle}
\def\C{\mathcal{C}}
\def\ra{\rangle}
\def\e{{\rm e}}
\def\x{\mathbf{x}}
\def\by{\mathbf{y}}
\def\bz{\mathbf{z}}
\def\F{\mathcal{F}}
\def\R{\mathbb{R}}
\def\T{\mathbf{T}}
\def\N{\mathbb{N}}
\def\K{\mathbf{K}}
\def\bK{\overline{\mathbf{K}}}
\def\Q{\mathbf{Q}}
\def\M{\mathbf{M}}
\def\O{\mathbf{O}}
\def\C{\mathbf{C}}
\def\P{\mathbf{P}}
\def\Z{\mathbf{Z}}
\def\H{\mathbf{H}}
\def\A{\mathbf{A}}
\def\V{\mathbf{V}}
\def\AA{\overline{\mathbf{A}}}
\def\B{\mathbf{B}}
\def\c{\mathbf{c}}
\def\L{\mathbf{L}}
\def\bS{\mathbf{S}}
\def\H{\mathbf{H}}
\def\I{\mathbf{I}}
\def\Y{\mathbf{Y}}
\def\X{\mathbf{X}}
\def\G{\mathbf{G}}
\def\f{\mathbf{f}}
\def\z{\mathbf{z}}
\def\y{\mathbf{y}}
\def\d{\hat{d}}
\def\bx{\mathbf{x}}
\def\y{\mathbf{y}}
\def\v{\mathbf{v}}
\def\g{\mathbf{g}}
\def\w{\mathbf{w}}
\def\b{\mathcal{B}}
\def\a{\mathbf{a}}
\def\b{\mathbf{b}}
\def\q{\mathbf{q}}
\def\u{\mathbf{u}}
\def\h{\mathbf{h}}
\def\s{\mathcal{S}}
\def\bs{\mathbf{s}}
\def\br{\mathbf{r}}
\def\cc{\mathcal{C}}
\def\co{{\rm co}\,}
\def\cop{{\rm COP}\,}
\def\tg{\tilde{f}}
\def\tx{\tilde{\x}}
\def\supmu{{\rm supp}\,\mu}
\def\supnu{{\rm supp}\,\nu}
\def\m{\mathcal{M}}
\def\s{\mathcal{S}}
\def\k{\mathcal{K}}
\def\la{\langle}
\def\ra{\rangle}
\def\psd{{\rm Psd}}

\title[nonnegativity]{Borel measures with a density on a compact semi-algebraic set}

\author{Jean B. Lasserre}
\address{LAAS-CNRS and Institute of Mathematics\\
University of Toulouse\\
LAAS, 7 avenue du Colonel Roche\\
31077 Toulouse C\'edex 4,France}
\email{lasserre@laas.fr}
\date{}

\begin{abstract}
Let $\K\subset\R^n$ be a compact basic semi-algebraic set.
We provide a necessary and sufficient condition 
(with no {\it \`a priori} bounding parameter)
for a real sequence $\y=(y_\alpha)$, $\alpha\in\N^n$, 
to have a finite representing Borel measure
absolutely continuous w.r.t. the Lebesgue measure on $\K$, and
with a density in $\cap_{p\geq1} L_p(\K)$.
With an additional condition involving a bounding parameter,
the condition is necessary and sufficient for existence of a density in $L_\infty(\K)$.
Moreover, nonexistence of such a density
can be detected by solving finitely many of a hierarchy of semidefinite programs.
In particular, if the semidefinite program at step $d$ of the hierarchy has no solution then
the sequence cannot have a representing measure on $\K$ with a density in $L_p(\K)$ for any $p\geq 2d$.
\end{abstract}

\keywords{L-moment problem; Borel measures with a density; semidefinite programming}

\subjclass{44A60 47A57 90C22}

\maketitle

\section{Introduction}


The famous Markov moment problem (also called the $L$-problem of moments) is concerned with characterizing
real sequences $(s_n)$, $n\in\N$, which are moment of a Borel probability measure $\mu$ on $[0,1]$
with a bounded density with respect to (w.r.t.) the Lebesgue measure. It  
was posed by Markov and later solved by Hausdorff with
the following necessary and sufficient condition:
\begin{equation}
\label{markov}
1=s_0\quad\mbox{and}\quad 0\leq\,s_{nj}\,\leq\,c/(n+1),\qquad \forall\,n,j\in\N,
\end{equation}
for some $c>0$, where $s_{nj}:=(-1)^{n-j}{n\choose j}\Delta^{n-j}s_j$, and $\Delta$ is the forward operator
$s_n\mapsto \Delta s_n=s_{n+1}-s_n$.
Similarly, with $p>1$, if in (\ref{markov}) one replaces the condition ``$s_{nj}\leq c/(n+1)$ for all  $n,j\in\N$", with the condition
\begin{equation}
\label{lp}
\sup_n\:\left(\frac{1}{n+1}\sum_{j=0}^n((n+1)\,s_{nj})^p\right)^{1/p}\,<\,c,\end{equation}
then one obtains a characterization of real sequences having a representing Borel measure with a density
in $L_p([0,1])$ with $p$-norm bounded by $c$.
For an illuminating discussion with historical remarks the reader is referred to Diaconis and Freedman \cite{Diaconis}
where the authors also make a connection with De Finetti's theorem on exchangeable 0-1 valued random variables. 
In addition, Putinar \cite{putinar1,putinar2} has provided  a characterization
of extremal solutions of the two-dimensional $L$-problem of moments.

Observe that the above condition (\ref{markov}) is stated in terms of {\em linear}
inequalities on the $s_j$'s. An alternative {\it if and only if} characterization is via 
positive definiteness of some sequence $t_n(c)$ related to the sequence $(s_n)$, as described in Ahiezer and Krein \cite{ahiezer}
for full (as well as truncated) classical and trigonometric (univariate) moment problems.
For instance, a sequence $(s_n)$, $n\in\N$, has a representing measure
on the real line $(-\infty,+\infty)$ with density $f$ (w.r.t. the Lebesgue measure) bounded by $c$ if and only if a certain sequence
$t_n(c)$, $n\in\N$, where each $t_n(c)$ is a polynomial of degree $n$ in the variables $(\frac{1}{c},s_0,\ldots,s_n)$, is positive definite.
See e.g. \cite[Theorem 6, p. 72]{ahiezer} 
and \cite[Theorem 7, p. 77]{ahiezer} for the truncated and full $L$-moment problems, respectively. 
In other words, each (Hankel) moment matrix of size $n$ (filled up with finitely many of the $t_n(c)$'s) is positive semidefinite.

In modern language those conditions are Linear Matrix Inequalities (LMIs) on the moment matrix associated 
with the sequence $(t_n(c))$ (but not $(s_n)$). More recently, another type of LMI conditions 
still in the spirit of those provided in Ahiezer and Krein \cite{ahiezer} (but now directly on the sequence $(s_n)$)
was provided in Lasserre \cite[p. 67--69]{lasserre-book} for the full $L$-moment
problem on arbitrary compact basic semi-algebraic sets $\K\subset\R^n$. 
It involves the moment and localizing matrices associated with 
$(s_n)$ and the Lebesgue measure
$\lambda$ on $\K$, respectively. It states that the moment and localizing matrices
associated with the sequence $(s_n)$ and $\K$
must be dominated by that of $\lambda$ (scaled with some factor $c>0$). 
A similar characterization also holds for measures on non-compact sets, satisfying an additional Carleman type condition.
Hence such conditions are LMIs on the $s_j$'s directly with no need to build up a related sequence $(t_n(c))$ 
which is nonlinear in the $s_j$'s, as in \cite{ahiezer}.

However, all the above characterizations have a common feature. They all
involve the {\it \`a priori} unknown scalar $c>0$ which is
precisely the required bound on the density. So for instance, 
if the conditions (\ref{markov}) fail one does not know whether 
it is because the sequence has no representing measure $\mu$ 
with a density in $L_\infty([0,1])$ or because $c$ is not large enough.
So it is more appropriate to state that (\ref{markov}) are necessary and sufficient conditions for
$\mu$ to have a density $f\in L_\infty([0,1])$ {\it with} $\Vert f\Vert_\infty\leq c$. And similarly for
(\ref{lp}) for $f\in L_p([0,1])$ {\it with} $\Vert f\Vert_p\leq c$.\\

\noindent
{\bf Contribution:}  Consider a compact basic semi-algebraic set $\K\subset\R^n$
of the form
\begin{equation}
\label{setk}
\K:=\{\x\in\R^n\,:\, g_j(\x)\geq0,\: j=1,\ldots,m\},
\end{equation}
for some polynomials $g_j\in\R[\x]$, $j=1,\ldots,m$.
For every, $p\in\N$, denote by $L_p(\K)$ the Lebesgue space of functions such that $\int_\K \vert f\vert^p \lambda(d\x)<\infty$. We then provide a set of conditions (S) with no {\it \`a priori} bound $c$, and such that:\\

- A real sequence $\y=(y_\alpha)$, $\alpha\in\N^n$, has a finite representing Borel measure with a density in $\cap_{p\geq1} L_p(\K)$, if and only if (S) is satisfied. In particular, if (S) is violated
we obtain a condition (with no \`a priori bounding parameter $c$) for non existence of a density in
$\cap_{p\geq1} L_p(\K)$ (hence no density in $L_\infty(\K)$ either).

- A real sequence $\y=(y_\alpha)$, $\alpha\in\N^n$, has a finite representing Borel measure with a density in $L_\infty(\K)$ if and only if (S) and an additional condition (involving an {\it \`a priori} bound $c>0$), 
are satisfied.

In addition, the conditions (S) consist of a hierarchy of Linear Matrix Inequalities
(LMIs) (again in the spirit of \cite{ahiezer,lasserre-book}) and so can be tested numerically via available semidefinite programming softwares.
In particular, if a finite Borel measure $\mu$ does not have a density 
in $\cap_{p\geq1}L_p(\K)$ (and so no density in $L_\infty(\K)$ either), it can be detected by solving {\it finitely many} semidefinite programs in the hierarchy until one has no feasible solution. 
That is, it can be detected from finitely many of its moments. This is illustrated on a simple example.

Conversely, if the semidefinite program at step $d$ of the hierarchy has no solution, then one may
conclude that  the real sequence $\y$ cannot have a representing measure on $\K$ with a density in $L_{p}(\K)$,
for any $p\geq 2d$.

So a distinguishing feature of our result is the absence of an {\it \`a priori} bound $c$
in the set of condition (S) to test whether $\y$ has a density in $\cap_{p=1}^\infty L_p(\K)$. 
Crucial for our result is a representation of polynomials that are positive on 
$\K\times\R$, by Powers \cite{powers}; see also Marshall \cite{marshall1,marshall2}.

\section{Main result}

\subsection{Notation, definitions and preliminary results}

Let $\R[\x,t]$ (resp. $\R[\x,t]_d$) denote the ring of real polynomials in the variables
$\x=(x_1,\ldots,x_n,t)$ (resp. polynomials of degree at most $d$), whereas $\Sigma[\x,t]$ (resp. $\Sigma[\x,t]_d$) denotes 
its subset of sums of squares (s.o.s.) polynomials (resp. of s.o.s. of degree at most $2d$).
For every
$\alpha\in\N^n$ the notation $\x^\alpha$ stands for the monomial $x_1^{\alpha_1}\cdots x_n^{\alpha_n}$ and for every $d\in\N$, let $\N^{n+1}_d:=\{\beta\in\N^{n+1}:\sum_j\beta_j\leq d\}$ whose cardinal is $s(d)={n+d+1\choose d}$.
A polynomial $f\in\R[\x,t]$ is written 
\[(\x,t)\mapsto f(\x,t)\,=\,\sum_{(\alpha,k)\in\N^n\times\N}\,f_{\alpha k} \,\x^\alpha\,t^k,\]
and $f$ can be identified with its vector of coefficients $\f=(f_{\alpha k})$ in the canonical basis 
$(\x^\alpha,t^k)$, $(\alpha,k)\in\N^n\times\N$, of $\R[\x,t]$. But we can also write $f$ as
\begin{equation}
\label{decomp-f}
(\x,t)\mapsto f(\x,t)\,=\,\sum_{k\in\N}\,f_k(\x)\,t^k,\end{equation}
for finitely many polynomials $f_k\in\R[\x]$.\\

A real sequence $\z=(z_{\alpha k})$, $(\alpha,k)\in\N^n\times\N$, has a {\it representing measure} if
there exists some finite Borel measure $\nu$ on $\R^n\times\R$ such that 
\[z_{\alpha k}\,=\,\int_{\R^{n+1}}\x^\alpha\,t^k\,d\nu(\x,t),\qquad\forall\,(\alpha,k)\in\N^n\times\N.\]

Given a real sequence $\z=(z_{\alpha k})$ define the linear functional $L_\y:\R[\x,t]\to\R$ by:
\[f\:(=\sum_{\alpha,k} f_{\alpha k}\,\x^\alpha\,t^k)\quad\mapsto L_\z(f)\,=\,\sum_{\alpha,k}f_{\alpha k}\,y_{\alpha k},\qquad f\in\R[\x,t].\]
\subsection*{Moment matrix}
The {\it moment} matrix associated with a sequence
$\z=(z_{\alpha k})$, $(\alpha,k)\in\N^n\times\N$, is the real symmetric matrix $\M_d(\z)$ with rows and columns indexed by $\N^{n+1}_d$, and whose entry $(\alpha,\beta)$ is just $z_{\alpha+\beta}$, for every 
$\alpha,\beta\in\N^{n+1}_d$. 
Alternatively, let
$\v_d((\x,t))\in\R^{s(d)}$ be the vector $((\x,t)^\alpha)$, $\alpha\in\N^{n+1}_d$, and
define the matrices $(\B_\alpha)\subset\s^{s(d)}$ by
\begin{equation}
\label{balpha}
\v_d((\x,t))\,\v_d((\x,t))^T\,=\,\sum_{\alpha\in\N^{n+1}_{2d}}\B_\alpha\,(\x,t)^\alpha,\qquad\forall(\x,t)\in\R^{n+1}.\end{equation}
Then $\M_d(\z)=\sum_{\alpha\in\N^{n+1}_{2d}}z_\alpha\,\B_\alpha$.

If $\z$ has a representing measure $\nu$ then
$\M_d(\z)\succeq0$ because
\[\langle\f,\M_d(\z)\f\rangle\,=\,\int f^2\,d\nu\,\geq0,\qquad\forall \,\f\,\in\R^{s(d)}.\]

\subsection*{Localizing matrix}
With $\z$ as above and $g\in\R[\x,t]$ (with $g(\x,t)=\sum_\gamma g_\gamma(\x,t)^\gamma$), the {\it localizing} matrix associated with $\z$ 
and $g$ is the real symmetric matrix $\M_d(g\,\z)$ with rows and columns indexed by $\N^{n+1}_d$, and whose entry $(\alpha,\beta)$ is just $\sum_{\gamma}g_\gamma z_{\alpha+\beta+\gamma}$, for every $\alpha,\beta\in\N^{n+1}_d$.
Alternatively, let $\C_\alpha\in\s^{s(d)}$ be defined by:
\begin{equation}
\label{calpha}
g(\x,t)\,\v_d(\x,t)\,\v_d(\x,t)^T\,=\,\sum_{\alpha\in\N^{n+1}_{2d+{\rm deg}\,g}}\C_\alpha\,(\x,t)^\alpha,\qquad\forall(\x,t)\in\R^{n+1}.\end{equation}
Then $\M_d(g\,\z)=\sum_{\alpha\in\N^{n+1}_{2d+{\rm deg}g}}z_\alpha\,\C_\alpha$.

If $\z$ has a representing measure $\nu$ whose support is 
contained in the set $\{(\x,t)\,:\,g(\x,t)\geq0\}$ then
$\M_d(g\,\z)\succeq0$ because
\[\langle\f,\M_d(g\,\y)\f\rangle\,=\,\int f^2\,g\,d\nu\,\geq0,\qquad\forall \,\f\,\in\R^{s(d)}.\]

With $\K$ as in (\ref{setk}), and for every $j=0,1,\ldots,m$, let $v_j:=\lceil ({\rm deg}\,g_j)/2\rceil$.
\begin{defn}
With $\K$ as in (\ref{setk}) let $P(g)\subset\R[\x,t]$ 
be the convex cone:
\begin{equation}
\label{put1}
P(g)\,=\,\left\{\:\sum_{\beta\in\{0,1\}^m} \psi_\beta(\x,t)\,g_1(\x)^{\beta_1}\cdots g_m(\x)^{\beta_m}\::\: \psi_\beta\in\Sigma[\x,t]\,
\right\}.\end{equation}
\end{defn}
The convex cone $P(g)$ is called a {\it preordering} associated with the $g_j$'s.\\

\begin{prop}
\label{prop1}
Let $\K$ be as in (\ref{setk}). A polynomial $f\in\R[\x,t]$ is nonnegative on $\K\times\R$ only if $f$ can be written as
\begin{equation}
\label{f-decomp}
(\x,t)\,\mapsto\quad f(\x,t)\,=\,\sum_{k=0}^{2d}f_k(\x)\,t^{k},\end{equation}
for some $d\in\N$ and where $f_{2d}\geq0$ on $\K$.
\end{prop}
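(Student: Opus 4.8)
The plan is to reduce everything to the one-dimensional fact that a univariate real polynomial which is nonnegative on all of $\R$ has even degree and positive leading coefficient, applied to the slices $t\mapsto f(\x,t)$ for fixed $\x\in\K$. Writing $f(\x,t)=\sum_{k=0}^N f_k(\x)\,t^k$ with $N=\deg_t f$ (so that $f_N\not\equiv0$ in $\R[\x]$), the two things to establish are that $N$ is even and that $f_N\geq0$ on $\K$; setting $2d=N$ then gives (\ref{f-decomp}).

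First I would obtain $f_N\geq0$ on $\K$ with no extra hypothesis. Fix $\x\in\K$; by assumption $f(\x,t)\geq0$ for every $t\in\R$, and for $t>0$ we have $t^N>0$, so $f(\x,t)/t^N\geq0$. Letting $t\to+\infty$ and using $f(\x,t)/t^N=f_N(\x)+O(1/t)$ yields $f_N(\x)=\lim_{t\to+\infty}f(\x,t)/t^N\geq0$. Since $\x\in\K$ was arbitrary, $f_N\geq0$ on $\K$.

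Next I would settle the parity of $N$ by contradiction. Since the problem concerns densities with respect to the Lebesgue measure on $\K$, the set $\K$ has nonempty interior, and a nonzero polynomial cannot vanish on a nonempty open set; hence $f_N\not\equiv0$ on $\K$, and combined with $f_N\geq0$ there is a point $\x_0\in\K$ with $f_N(\x_0)>0$. If $N$ were odd, then $t^N\to-\infty$ as $t\to-\infty$, so $f(\x_0,t)=f_N(\x_0)\,t^N+O(t^{N-1})\to-\infty$, contradicting $f\geq0$ on $\K\times\R$. Therefore $N$ is even, say $N=2d$, and $f_{2d}=f_N\geq0$ on $\K$, which is the asserted decomposition.

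The one delicate step is precisely the guarantee that the leading coefficient does not vanish identically on $\K$, which is what forces $N$ to be even, and this is where the nonempty interior of $\K$ enters. I expect this to be the only real obstacle: without it one could merely pad the sum to the next even index with a vanishing top coefficient, so that the genuine content of the statement lives in the case $f_N\not\equiv0$ on $\K$, where $f_N$ is a true leading term that is nonnegative on $\K$ and strictly positive somewhere on it.
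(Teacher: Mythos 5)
Your proof is correct and follows essentially the same route as the paper's: look at univariate slices $t\mapsto f(\x_0,t)$ for $\x_0\in\K$, rule out odd top degree in $t$ because an odd-degree univariate polynomial takes negative values, and get nonnegativity of the top coefficient from the behavior as $t\to+\infty$. The noteworthy difference is that you are \emph{more} careful than the paper on the one delicate point. The paper's proof fixes ``an arbitrary $\x_0\in\K$'' and asserts that the slice can be made negative; this silently requires $f_{2d+1}(\x_0)\neq0$, which can fail for \emph{every} $\x_0\in\K$ when the nonzero polynomial $f_{2d+1}$ vanishes identically on $\K$. For instance, with $\K=\{x\in\R : -x^2\geq0\}=\{0\}$ and $f(x,t)=xt^3+1$, one has $f\geq0$ on $\K\times\R$ although the true degree in $t$ is odd; so the stronger claim implicit in the paper's argument (the genuine degree in $t$ must be even) is false without some extra hypothesis. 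Your proof handles exactly this: either $\K$ has nonempty interior (a natural reading given the paper's setting of densities w.r.t.\ Lebesgue measure, though not a stated hypothesis of the proposition), in which case $f_N\not\equiv0$ on $\K$ and your argument pins down the parity of the true degree; or, as your closing paragraph observes, $f_N$ vanishes identically on $\K$ and one pads the sum to the next even index with a zero top coefficient, which still satisfies (\ref{f-decomp}) since $0\geq0$ on $\K$. That padded reading is all the paper actually uses downstream: in the proof of Corollary \ref{mar-moment}, the perturbation $f+\epsilon(1+t^{2d})$ has top coefficient $f_{2d}+\epsilon>0$ on $\K$ whether or not $f_{2d}$ is a genuine leading term. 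So your two cases together constitute a complete proof, whereas the paper's, read literally, has a small gap.
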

\begin{proof}
Suppose that the highest degree in $t$ is $2d+1$ for some $d\in\N$. Then $f_{2d+1}\neq0$ and so
by fixing an arbitrary $\x_0\in\K$, the univariate polynomial $t\mapsto f(\x_0,t)$ 
can be made negative, in contradiction with $f\geq0$ on $\K\times\R$. Hence the highest degree in $t$ is even, say $2d$.
But then of course, for obvious reasons $f_{2d}\geq0$ on $\K$. 
\end{proof}
We have the following important preliminary result.
\begin{thm}[\cite{marshall1,powers}]
\label{th-mar}
Let $\K$ as in (\ref{setk}) be compact and let
$f\in\R[\x,t]$ be of the form $f(\x,t)=\sum_{k=0}^{2d}f_k(\x)t^k$ for some polynomials
$(f_k)\subset\R[\x]$, and with $f_{2d}>0$ on $\K$. Then $f\in P(g)$ if $f>0$ on $\K\times\R$.
\end{thm}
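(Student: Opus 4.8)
The plan is to remove the non-compactness of the cylinder $\K\times\R$ in the $t$-direction, which is the sole obstruction to applying a classical Positivstellensatz: Schmüdgen's theorem produces preordering representations on \emph{compact} semi-algebraic sets, whereas $\K\times\R$ is unbounded along $t$. The hypothesis $f_{2d}>0$ on $\K$ is exactly the ingredient that controls the behaviour of $f$ as $t\to\pm\infty$, and it is what I would exploit in order to compactify the fibre. Since $\K$ is compact and $f_{2d}>0$ on $\K$, there is $\delta>0$ with $f_{2d}\geq\delta$ on $\K$, and all the other coefficients $f_k$ are bounded on $\K$; this forces $f$ to grow like $f_{2d}(\x)\,t^{2d}$ at infinity, uniformly over $\K$.

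Next I would compactify projectively in $t$. Introduce an auxiliary variable $s$ and homogenize $f$ in $(t,s)$ to
\[
F(\x,t,s)\,=\,s^{2d}\,f(\x,t/s)\,=\,\sum_{k=0}^{2d}f_k(\x)\,t^k\,s^{2d-k},
\]
which is homogeneous of degree $2d$ in $(t,s)$. The claim is that $F>0$ on the compact set $\K\times\{t^2+s^2=1\}$: for $s\neq0$ one has $F=s^{2d}f(\x,t/s)>0$ because $f>0$ on $\K\times\R$, while for $s=0$ (so $t=\pm1$ on the circle) one has $F=f_{2d}(\x)\geq\delta>0$. Thus the strict positivity of the leading coefficient is precisely what yields strict positivity at the two points at infinity $s=0$, turning the non-compact problem into a compact one.

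The set $\K\times\{t^2+s^2=1\}$ is compact and basic closed semi-algebraic, described by $g_j(\x)\geq0$ together with $\pm(t^2+s^2-1)\geq0$. Schmüdgen's Positivstellensatz then furnishes a representation
\[
F\,=\,\phi\,+\,(t^2+s^2-1)\,q,\qquad \phi=\sum_{\beta\in\{0,1\}^m}\sigma_\beta\,g_1^{\beta_1}\cdots g_m^{\beta_m},
\]
with each $\sigma_\beta\in\Sigma[\x,t,s]$ and $q\in\R[\x,t,s]$, so that $\phi$ already has the shape of an element of the preordering. It then remains to dehomogenize, that is, to pass from this identity on the circle bundle back to a representation of $f(\x,t)=F(\x,t,1)$ inside $P(g)\subset\R[\x,t]$.

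This dehomogenization is the main obstacle. Naively setting $s=1$ leaves the spurious term $t^2\,q(\x,t,1)$, because $t^2+s^2-1$ does not vanish on the chart $s=1$; and simply extracting the degree-$2d$ part in $(t,s)$ does not help, since an intermediate homogeneous component of a sum of squares need not itself be a sum of squares. The resolution is to use the homogeneity of $F$ together with the uniform growth coming from $f_{2d}\geq\delta$ to control the remainder, reducing modulo $t^2+s^2-1$ so that the equality-constraint contribution is absorbed into the preordering; equivalently, one may glue local representations obtained on the two affine charts $t$ and $1/t$, the second governed by the reciprocal polynomial $t^{2d}f(\x,1/t)$ whose value at $t=0$ is again $f_{2d}(\x)>0$. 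It is essential here that $f_{2d}>0$, and not merely $f_{2d}\geq0$, on $\K$, as this is what guarantees uniform room to carry out the absorption near the points at infinity. This is the delicate step carried out in Powers \cite{powers} and Marshall \cite{marshall1}, and completing it yields $f\in P(g)$.
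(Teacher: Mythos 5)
The first thing to note is that the paper contains no proof of Theorem \ref{th-mar} to compare against: it is imported verbatim from Powers \cite{powers} and Marshall \cite{marshall1}, which is exactly why it carries those citations. So the only question is whether your argument stands on its own, and it does not. Everything you actually carry out --- homogenizing to $F(\x,t,s)=\sum_k f_k(\x)t^ks^{2d-k}$, checking $F>0$ on the compact set $\K\times\{t^2+s^2=1\}$ (using $f_{2d}\geq\delta>0$ at the points $s=0$), and invoking Schm\"udgen's Positivstellensatz there --- is the easy half. The entire content of the theorem is concentrated in the step you defer: converting the identity $F=\phi+(t^2+s^2-1)q$, whose sum-of-squares data live in $\R[\x,t,s]$ and which carries an equality-constraint term of no fixed sign, into a representation of $f(\x,t)=F(\x,t,1)$ inside $P(g)\subset\R[\x,t]$. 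You correctly diagnose why the obvious attempts fail (setting $s=1$ leaves the spurious term $t^2q(\x,t,1)$; a homogeneous component of a sum of squares need not be a sum of squares), but the proposed remedy --- ``reduce modulo $t^2+s^2-1$ and absorb,'' or ``glue the charts $t$ and $1/t$'' --- is not an argument: no mechanism is given by which that mixed term is absorbed into a preordering whose multipliers must be sums of squares not involving $s$. This cannot be generic hand-waving, since strict positivity on a non-compact set does \emph{not} in general imply preordering membership; whatever makes cylinders special must be used in detail precisely at this point. Ending with ``this is the delicate step carried out in Powers and Marshall'' therefore makes the argument circular: the theorem you were asked to prove \emph{is} the main result of those papers.

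For what it is worth, the known proofs do not proceed by projective compactification at all. Powers' argument works fiberwise: for each fixed $\x\in\K$ the univariate polynomial $t\mapsto f(\x,t)$ is strictly positive on $\R$, hence a sum of two squares in $\R[t]$; the hypothesis $f_{2d}>0$ on $\K$ is what allows such a decomposition to be chosen with coefficients varying \emph{continuously} in $\x$; one then approximates these continuous coefficient functions by polynomials (Stone--Weierstrass) and absorbs the resulting uniformly small error term using Schm\"udgen's Positivstellensatz on the compact set $\K$ itself, applied in the $\x$ variables only. Compactness of $\K$ and the strict lower bound $f_{2d}\geq\delta$ enter through that error-absorption step --- a genuinely different mechanism from the one you gesture at, and one that, unlike your sketch, never has to undo a homogenization.
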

And so we can derive a version of the $\K\times\R$-moment problem where for each $\beta\in\N^m$,
the notation $g^\beta$ stands for the polynomial $g_1^{\beta_1}\cdots g_m^{\beta_m}$.
\begin{cor}
\label{mar-moment}
Let $\K$ as in (\ref{setk}) be compact. A real sequence
$\z=(z_{\alpha k})$, $(\alpha,k)\in\N^n\times\N$, has a representing measure on $\K\times\R$ if and only if
\begin{equation}
\label{strip-moment}
\M_d(\z)\succeq0;\quad \M_d(g^\beta\,\z)\succeq0,\qquad \beta\in\{0,1\}^m,\end{equation}
for every $d\in\N$.
\end{cor}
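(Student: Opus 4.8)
The proof is a standard duality argument between the moment cone and the cone of nonnegative polynomials, so I would establish the two directions separately. The "only if" direction is immediate: if $\z$ has a representing measure $\nu$ on $\K\times\R$, then for every $d$ and every $\f\in\R^{s(d)}$ one has $\langle\f,\M_d(\z)\f\rangle=\int f^2\,d\nu\geq0$, and similarly $\langle\f,\M_d(g^\beta\z)\f\rangle=\int f^2\,g^\beta\,d\nu\geq0$ since $g^\beta\geq0$ on $\K\times\R\supseteq\mathrm{supp}\,\nu$. This gives \eqref{strip-moment} for all $d$, exactly as recorded in the moment-matrix and localizing-matrix discussion above.

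The substance is the "if" direction. The plan is to show that $L_\z$ is nonnegative on the cone $P(g)$ of \eqref{put1}, and then to invoke a Riesz–Haviland type extension/representation theorem to produce the measure. First I would fix an arbitrary $f\in P(g)$, write $f=\sum_{\beta\in\{0,1\}^m}\psi_\beta\,g^\beta$ with each $\psi_\beta\in\Sigma[\x,t]$, and decompose each $\psi_\beta$ as a finite sum of squares $\sum_i q_{\beta i}^2$. By linearity, $L_\z(f)=\sum_{\beta,i}L_\z(q_{\beta i}^2\,g^\beta)$, and each such term equals $\langle\q_{\beta i},\M_d(g^\beta\z)\q_{\beta i}\rangle$ for $d$ large enough (where $\q_{\beta i}$ is the coefficient vector of $q_{\beta i}$). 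Hypothesis \eqref{strip-moment}, holding for every $d$, then forces $L_\z(f)\geq0$. Thus $L_\z\geq0$ on $P(g)$.

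Now the key structural input is Proposition~\ref{prop1} together with Theorem~\ref{th-mar}. These say that $P(g)$ is, up to its boundary, dense in the cone of polynomials nonnegative on $\K\times\R$: by Proposition~\ref{prop1} any $f\geq0$ on $\K\times\R$ has even top-degree $2d$ in $t$ with $f_{2d}\geq0$ on $\K$, and by Theorem~\ref{th-mar} if we perturb to $f_\varepsilon:=f+\varepsilon(1+t^{2d}+\cdots)$ so that the strict positivity and the $f_{2d}>0$ on $\K$ conditions hold, then $f_\varepsilon\in P(g)$. I would therefore conclude $L_\z(f_\varepsilon)\geq0$ for all $\varepsilon>0$, and let $\varepsilon\downarrow0$ to obtain $L_\z(f)\geq0$ for every $f$ nonnegative on $\K\times\R$. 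Having shown $L_\z$ is nonnegative on all polynomials nonnegative on the closed set $\K\times\R$, the classical Riesz–Haviland theorem yields a representing Borel measure $\nu$ supported on $\K\times\R$, which is what we want.

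The main obstacle I anticipate is the limiting argument $\varepsilon\downarrow0$, since Theorem~\ref{th-mar} only applies under strict positivity ($f>0$ on $\K\times\R$ and $f_{2d}>0$ on $\K$), and one must carefully choose the perturbation so that it simultaneously restores both strict inequalities while keeping the top $t$-degree even and under control; one must also verify that $L_\z(f_\varepsilon)\to L_\z(f)$, which uses that $L_\z$ is a fixed linear functional on a finite-dimensional space once the degree of $f$ is fixed, so the perturbation should not raise the degree. A secondary technical point is confirming the hypotheses of Riesz–Haviland on the \emph{non-compact} set $\K\times\R$ (the $t$-direction is unbounded), so one must ensure the characterization in terms of nonnegativity on $\K\times\R$ is genuinely the correct dual cone there; this is precisely the role played by Proposition~\ref{prop1}, which controls the behavior in $t$.
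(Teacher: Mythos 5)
Your proposal is correct and follows essentially the same route as the paper's own proof: nonnegativity of $L_\z$ on $P(g)$ from the conditions (\ref{strip-moment}), then the perturbation argument combining Proposition \ref{prop1} and Theorem \ref{th-mar}, then the Riesz--Haviland theorem on the closed set $\K\times\R$. The obstacle you flag is resolved by exactly the perturbation the paper uses, $f_\epsilon:=f+\epsilon(1+t^{2d})$: it keeps the top $t$-degree equal to $2d$, gives $f_\epsilon\geq\epsilon>0$ on $\K\times\R$ and $f_{\epsilon 2d}=f_{2d}+\epsilon>0$ on $\K$, and $L_\z(f_\epsilon)=L_\z(f)+\epsilon\,L_\z(1+t^{2d})\to L_\z(f)$ as $\epsilon\downarrow0$ since $L_\z(1+t^{2d})$ is a fixed finite number.
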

\begin{proof}
The {\it only if} part is straightforward from the definition of the moment and localizing matrix
$\M_d(\z)$ and $\M_d(g^\beta\,\z)$, respectively.

The {\it if part}. Suppose that (\ref{strip-moment}) holds true, and let $f\in\R[\x,t]$
be nonnegative on the closed set $\K\times\R$. Hence by Proposition \ref{prop1}, $f$ has the decomposition (\ref{f-decomp}) for some 
integer $d\neq0$. For every $\epsilon>0$,
the polynomial $(\x,t)\mapsto f_\epsilon(\x,t):=f(\x,t)+\epsilon(1+t^{2d})$ has the decomposition
\[f_\epsilon(\x,t)\,=\,\sum_{k=0}^{2d}f_{\epsilon k}(\x)\,t^k,\]
with $f_{\epsilon 0}=f_0+\epsilon$ and $f_{\epsilon 2d}(\x)=f_{2d}(\x)+\epsilon $.
Therefore, $f_\epsilon$ is strictly positive on
$\K\times\R$, and $f_{\epsilon 2d}>0$ on $\K$. By Theorem \ref{th-mar}, $f_\epsilon\in Q(g)$, i.e.,
\[f_\epsilon(\x,t)\,=\,\sum_{\beta\in\{0,1\}^m}\psi_\beta(\x,t)\,g(\x)^\beta,\]
for some SOS polynomials $(\psi_\beta)\subset\Sigma[\x,t]$. Next, let $\z$ satisfy (\ref{strip-moment}).
Then
\begin{eqnarray*}
L_\z(f)+\epsilon \,L_\y(1+t^{2d})&=&L_\z(f_\epsilon)\\
&=&\sum_{\beta\in\{0,1\}^m}L_\z(\psi_\beta\,g^\beta)\,\geq\,0
\end{eqnarray*}
where the last inequality follows from
\[\M_d(g^\beta\,\z)\succeq0\:\Leftrightarrow\:L_\z(h^2\,g^\beta)\,\geq\,0,\quad\forall h\in\R[\x,t]_d,\]
for every $\beta\in\{0,1\}^m$. 
But since $L_\z(1+t^{2d})\geq0$ and $\epsilon>0$ was arbitrary, one may conclude that
$L_\z(f)\geq0$ for every $f\in\R[\x,t]$ which is nonnegative on $\K\times\R$. Hence by the Riesz-Haviland
theorem (see e.g. \cite[Theorem 3.1, p. 53]{lasserre-book}), $\z$ has a representing measure on $\K\times\R$.
\end{proof}

\subsection{Main result}

Let $L_\infty(\K)$ be the Lebesgue space of integrable functions on $\K$
(with respect to the Lebesgue measure $\lambda$ on $\K$, scaled to a probability measure) and essentially bounded on $\K$.
And with $1\leq p<\infty$,  let $L_p(\K)$ be the Lebesgue space of integrable functions $f$ on $\K$
such that $\int_\k\vert f\vert^p\lambda(d\x)<\infty$. A Borel measure $\mu$ absolutely continuous w.r.t. $\lambda$ is denoted $\mu\ll\lambda$.

\begin{thm}
\label{thmain}
Let $\K\subset \R^n$ be a nonempty compact basic semi-algebraic set of the form
\[\K\,:=\,\{\,\x\in\R^n\::\: g_j(\x)\,\geq\,0,\quad j=1,\ldots,m\,\}\]
for some polynomials $(g_j)\subset\R[\x]$, and recall the notation $g^\beta\in\R[\x]$, with
\[\x\,\mapsto\,g^\beta(\x)\,:=\,g_1(\x)^{\beta_1}\cdots g_m(\x)^{\beta_m},\quad \x\in\R^n,\quad \beta\in\{0,1\}^m.\]
Let $\y=(\y_\alpha)$, $\alpha\in\N^n$, be a real sequence
with $y_0=1$.  
Then the  following two propositions (i) and (ii) are equivalent:

(i) $\y$ has a representing Borel probability measure $\mu\ll\lambda$ on $\K$, 
with a  density  in $\cap_{p\geq1} L_p(\K)$.

(ii) $\M_d(\y)\succeq0$ and $\M_d(g^{\beta}\,\y)\succeq0$ for all $\beta\in\{0,1\}^m$ and all $d\in\N$. In addition, there exists a sequence $\z=(z_{\alpha k})$, $(\alpha,k)\in\N^n\times\N$, such that
(\ref{strip-moment}) holds, and
\begin{equation}
\label{aux}
z_{\alpha 0}\,=\,\int_\K\x^\alpha\,\lambda(d\x)\,;\quad z_{\alpha 1}\,=\,y_\alpha,\quad \forall \alpha\in\N^n.
\end{equation}
Moreover, if in (\ref{aux}) one includes the additional condition $\sup_kz_{0k}<\infty$, then (ii)
is necessary and sufficient for $\y$
to have a representing Borel probability measure $\mu\ll\lambda$ on $\K$, 
with a density in $L_\infty(\K)$.
\end{thm}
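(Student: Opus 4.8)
The plan is to reduce everything to the $\K\times\R$-moment problem already solved in Corollary \ref{mar-moment}, by encoding a candidate density $f$ on $\K$ as the conditional mean of the auxiliary variable $t$ under a measure $\nu$ on $\K\times\R$ whose $\x$-marginal is forced to be $\lambda$. The three groups of conditions in (ii) play distinct roles: semidefiniteness of $\M_d(\y)$ and $\M_d(g^\beta\y)$ guarantees that the (unique) representing measure of $\y$ on the compact set $\K$ is a genuinely nonnegative measure; solvability of (\ref{strip-moment}) produces the auxiliary measure $\nu$ on $\K\times\R$; and the linear constraints (\ref{aux}) tie $\nu$ to $\lambda$ through $z_{\alpha0}$ and to $\y$ through $z_{\alpha1}$.

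For (i) $\Rightarrow$ (ii) I would push $\lambda$ forward under the graph map $\x\mapsto(\x,f(\x))$ to define a measure $\nu$ on $\K\times\R$, so that $z_{\alpha k}:=\int_\K\x^\alpha f(\x)^k\,d\lambda$. Each $z_{\alpha k}$ is finite because $\x^\alpha$ is bounded on the compact set $\K$ and $f\in L_k(\K)$ for every $k$. By construction $z_{\alpha0}=\int_\K\x^\alpha\,d\lambda$ and $z_{\alpha1}=\int_\K\x^\alpha f\,d\lambda=y_\alpha$, which is (\ref{aux}), while the necessity part of Corollary \ref{mar-moment} yields (\ref{strip-moment}). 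The semidefiniteness of $\M_d(\y),\M_d(g^\beta\y)$ is the usual necessary condition for the nonnegative measure $\mu$ to be supported on $\K$. If moreover $\|f\|_\infty\le c$, then $z_{0k}=\int_\K f^k\,d\lambda\le c^k$, which supplies the additional growth bound.

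For (ii) $\Rightarrow$ (i) I would first apply Corollary \ref{mar-moment} to $\z$ to obtain a representing measure $\nu$ on $\K\times\R$. Since $z_{\alpha0}=\int_\K\x^\alpha\,d\lambda$, the $\x$-marginal of $\nu$ and $\lambda$ have identical moments and are both supported on the compact set $\K$, hence coincide by determinacy of the moment problem on $\K$; in particular $\nu$ is a probability measure with $\x$-marginal $\lambda$. I would then disintegrate $\nu$ as $d\nu=d\nu_\x(t)\,d\lambda(\x)$ with conditional probability measures $\nu_\x$, and set $f(\x):=\int_\R t\,d\nu_\x(t)$. The constraint $z_{\alpha1}=y_\alpha$ then reads $y_\alpha=\int_\K\x^\alpha f(\x)\,d\lambda$, so $f\,d\lambda$ represents $\y$. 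To see $f\ge0$, note that $\M_d(\y),\M_d(g^\beta\y)\succeq0$ for all $d,\beta$ is exactly Schm\"udgen's solution of the moment problem on the compact set $\K$ (the argument of Corollary \ref{mar-moment} applied to $\K$ itself), so $\y$ admits a \emph{nonnegative} representing measure $\mu$ on $\K$; by determinacy this measure is unique, whence $\mu=f\,d\lambda$ and $f\ge0$ $\lambda$-a.e. Finally, Jensen's inequality for the convex map $t\mapsto t^{2k}$ gives $f(\x)^{2k}\le\int_\R t^{2k}\,d\nu_\x(t)$, so $\int_\K f^{2k}\,d\lambda\le z_{0,2k}<\infty$ for every $k$; since $\lambda$ is finite, this gives $f\in\cap_{p\ge1}L_p(\K)$.

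For the $L_\infty$ refinement I would combine the identity $\|f\|_{2k}^{2k}=\int_\K f^{2k}\,d\lambda\le z_{0,2k}$ with $\|f\|_\infty=\lim_k\|f\|_{2k}$ on the finite measure space: a control on the growth of the moments $z_{0k}$ in $k$ of the form $z_{0k}\le C\,c^{k}$ (equivalently $\sup_k z_{0k}^{1/k}\le c$) forces $\|f\|_\infty\le c$, while conversely $\|f\|_\infty\le c$ gives $z_{0k}\le c^k$; thus the role of the \`a priori bound is played precisely by this growth rate. The main obstacle, and the step deserving the most care, is the nonnegativity of $f$: the disintegration only produces a function representing $\y$, and it is exactly the semidefinite conditions on $\y$, together with uniqueness of the representing measure on the compact set $\K$, that upgrade this function to an honest density. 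The integrability estimates via Jensen and the monotone passage $\|f\|_{2k}\to\|f\|_\infty$ are then routine.
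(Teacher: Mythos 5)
Your proof of the main equivalence (i) $\Leftrightarrow$ (ii) is correct and is essentially the paper's own argument: for (i) $\Rightarrow$ (ii) the paper also takes the image of $\lambda$ under $\x\mapsto(\x,f(\x))$ (phrased there via the kernel $\varphi(\cdot\,\vert\,\x)=\delta_{f(\x)}$), and for (ii) $\Rightarrow$ (i) it uses exactly your chain: Corollary \ref{mar-moment}, disintegration of $\nu$, identification of the $\x$-marginal with $\lambda$ by moment determinacy on the compact set $\K$, Schm\"udgen's Positivstellensatz plus determinacy to identify $\mu$ with $f\,d\lambda$ (hence $f\geq0$ a.e.), and Jensen's inequality on the conditionals to get $f\in L_p(\K)$ for all $p$.

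The one place you diverge is the $L_\infty$ refinement, and your divergence is in substance a correction of the paper rather than a gap. As literally stated, the extra condition $\sup_k z_{0k}<\infty$ is sufficient: your own estimate gives $\Vert f\Vert_{2k}\leq(\sup_k z_{0k})^{1/2k}\to1$, so it forces $\Vert f\Vert_\infty\leq 1$, and then (since $\lambda$ is normalized so that $\lambda(\K)=1$ and $\int_\K f\,d\lambda=y_0=1$) even $f=1$ a.e. But it is \emph{not} necessary: take $f=2$ on a subset of $\lambda$-measure $1/2$ and $f=0$ elsewhere; then $f\in L_\infty(\K)$, yet every admissible $\z$ --- whose representing measure on $\K\times\R$ must have marginal $\lambda$ and conditional mean equal to $f$ a.e., as in your own argument --- satisfies $z_{0,2k}\geq\int_\K f^{2k}\,d\lambda=2^{2k-1}\to\infty$ by Jensen. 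The paper's necessity proof slips exactly here: from $\lim_p\Vert f\Vert_p=\Vert f\Vert_\infty<\infty$ it infers $\sup_p z_{0p}=\sup_p\Vert f\Vert_p^p<\infty$, which fails whenever $\Vert f\Vert_\infty>1$. Your reformulation --- a geometric growth bound $z_{0k}\leq C\,c^k$ (equivalently, control of $\limsup_k z_{0k}^{1/k}$) is necessary and sufficient for a density with $\Vert f\Vert_\infty\leq c$ --- is the correct statement, and your proof of it (Jensen in one direction, $z_{0k}\leq c^k$ in the other, together with $\Vert f\Vert_{2k}\to\Vert f\Vert_\infty$ on the finite measure space) is sound. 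So: same argument as the paper for the main equivalence; for the $L_\infty$ part you prove a correctly amended claim in place of the paper's literal one, whose necessity direction is false as stated.
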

\begin{proof}
The $\mbox{(i) $\Rightarrow$ (ii)}$ implication.
As $\y$ has a representing Borel probability measure $\mu$ on $\K$ with 
a density $f\in L_p(\K)$ for every $p=1,2,\ldots$, one may write
\[\mu(A)\,=\,\int_Af(\x)\,\lambda(d\x),\qquad \forall A\,\in\,\mathcal{B}(\R^n).\]
Define the stochastic kernel $\varphi(B\vert\x)$, $B\in\mathcal{B}(\R)$, $\x\in\K$,
where for almost all $\x\in\K$, $\varphi(\cdot\,\vert\,\x)$ is the Dirac measure at the point $f(\x)$.
Next, let $\nu$ be the finite Borel measure on $\K\times\R$ defined by
\begin{equation}
\label{nu}
\nu(A\times B)\,:=\,\int_A\varphi(B\vert\x)\,\lambda(d\x),\quad\forall A\in\mathcal{B}(\R^n),\:B\in\mathcal{B}(\R).\end{equation}
Let $\z=(y_{\alpha k})$, $(\alpha,k)\in\N^n\times\N$, be the sequence of moments of $\nu$.
\begin{eqnarray}
\nonumber
z_{\alpha k}&=&\int_\R \x^\alpha\,t^k\,d\nu(\x,t)\,=\,\int_\K\x^\alpha\,\left(\int_\R t^k\varphi(dt\,\vert\,\x)\right)\,\lambda(d\x),\\
\label{z-alphak}
&=&\int_\K\x^\alpha\,f(\x)^k \,\lambda(d\x)\quad\mbox{(well defined as $f\in L_p(\K)$ for all $p$)}.
\end{eqnarray}
In particular, for every $\alpha\in\N^n$,
\[z_{\alpha 0}\,=\,\int_\K \x^\alpha\, \,\lambda(d\x)\,;\quad z_{\alpha 1}\,=\,\int_\K \x^\alpha\, f(\x)\,\lambda(d\x)\,=\,
\int_\K\x^\alpha\,d\mu\,=\,y_\alpha.\]
Moreover, as $\nu$ is supported on $\K\times\R$ then $\M_d(\y)\succeq0$ and $\M_d(g^\beta\,\y)\succeq0$ for all
$d$ and all $\beta\in\{0,1\}^m$. Hence (\ref{strip-moment})-(\ref{aux}) hold.\\

The (ii) $\Rightarrow$ (i) implication. Let $\z=(z_{\alpha k})$ be such that (\ref{strip-moment})-(\ref{aux}) hold.
By Corollary (\ref{mar-moment}), $\z$ has a representing Borel probability measure $\nu$ on $\K\times\R$.
One may disintegrate $\nu$ in the form 
\[\nu(A\times B)\,=\,\int_{A\cap\K}\varphi(B\,\vert\,\x)\,\psi(d\x),\qquad B\in\mathcal{B}(\R),\:A\in\mathcal{B}(\R^n),\]
for some stochastic kernel $\varphi(\cdot\,\vert\,\x)$, and where $\psi$ is the marginal (probability measure) of $\nu$ on $\K$.
From (\ref{aux}) we deduce that
\[\int_\K\x^\alpha\,\psi(d\x)\,=\,z_{\alpha 0}\,=\,\int_\K\x^\alpha\,\lambda(d\x),\qquad \forall\alpha\in\N^n,\]
which, as $\K$ is compact, implies that $\psi=\lambda$.
In addition, still from (\ref{aux}),
\begin{eqnarray}
\nonumber
z_{\alpha 1}\,=\,\int_\K\x^\alpha\,t\,d\nu(\x,t)&=&\int_\K\x^\alpha\underbrace{\left(\int_\R t\,\varphi(dt\vert\x)\right)}_{f(\x)}\,\lambda(d\x)\,\qquad \forall\alpha\in\N^n\\
\label{signed}
&=&\int_\K\x^\alpha\,\underbrace{f(\x)\,\lambda(d\x)}_{d\theta(\x)}\,\qquad \forall\alpha\in\N^n,
\end{eqnarray}
where $f:\K\to\R$ is the measurable function $\x\mapsto\int_\R t\,\varphi(dt\vert\x)$, and $\theta$ is the signed Borel measure
$\theta(B):=\int_{\K\cap B}f(\x)\,\lambda(d\x)$, for all $B\in\mathcal{B}(\R^n)$. 

But as $\K$ is compact, by Schm\"udgen's Positivstellensatz \cite{schmudgen}, the conditions 
\[\M_d(\y)\,\succeq0,\quad \M_d(g^\beta\,\y)\,\succeq\,0,\quad\beta\in\{0,1\}^m,\quad\forall d\in\N,\]
imply that $\y$ has a finite representing Borel probability measure $\mu$ on $\K$. And so as 
$z_{\alpha 1}=y_\alpha$ for all $\alpha\in\N^n$, and measures on compact sets are moment determinate, 
one may conclude that $d\mu=d\theta=f\,d\lambda$, that is, $\mu\ll\lambda$ on $\K$ (and $f\geq0$ almost everywhere on $\K$).
Next, observe that for every even $p\in\N$, using Jensen's inequality,
\begin{eqnarray*}
z_{0 p}\,=\,\int_\K t^pd\nu(\x,t)&=&\int_\K\underbrace{\left(\int_\R t^p\,\varphi(dt\vert\x)\right)}_{\geq f(\x)^p}\,\lambda(d\x)\,\qquad \forall\alpha\in\N^n\\
&\geq&\int_\K f(\x)^p\,\lambda(d\x)\,\qquad \forall\alpha\in\N^n,
\end{eqnarray*}
and so $f\in L_p(\K)$ for all even $p\geq1$ (hence all $p\in\N$).

Finally consider (\ref{aux}) with the additional condition $\sup_{p}z_{0p}<\infty$.
Then  in the above proof of (i) $\Rightarrow$ (ii) 
 and since now $\y$ has a {\it finite} representing Borel measure with a density 
$f\in L_\infty(\K)$, one has $\lim_{p\to\infty} \Vert f\Vert_p=\Vert f\Vert_\infty$
because $\K$ is compact; see e.g. Ash \cite[problem 9, p. 91]{ash}. And therefore 
since $z_{0p}=\int_\K f(\x)^p\lambda(d\x)$, we obtain $\sup_pz_{0p}<\infty$. 

Similarly,  in the above proof of (ii) $\Rightarrow$ (i), $\sup_{p}z_{0p}<\infty$ implies
$\sup_p\int_\K f(\x)^p\lambda(d\x)=\sup_p\Vert f\Vert_p<\infty$. But this implies that $f\in L_\infty(\K)$
since $\K$ is compact.
\end{proof}

\subsection*{Computational procedure}

Let $\gamma=(\gamma_\alpha)$, $\alpha\in\N^n$, the moment of the Lebesgue measure on $\K$, scaled to make it a	
probability measure. In fact, the (scaled) Lebesgue measure on any box that contains $\K$ is fine.

Let $\y=(y_\alpha)$, $\alpha\in\N^n$, be a real given sequence, and with
$\K$ as in (\ref{setk}) let $v_j:=\lceil({\rm deg}\, g_j)/2\rceil$, $j=1,\ldots,m$.
To check the conditions in Theorem \ref{thmain}(ii), one solves the hierarchy of optimization problems,
parametrized by $d\in\N$.
\begin{equation}
\label{compute}
\begin{array}{rl}
\rho_d=\displaystyle\min_\z&{\rm trace}(\M_d(\z))\\
\mbox{s.t.}&\M_d(\z)\,\succeq\,0\\
&\M_{d-v_j}(g^\beta\,\z)\,\succeq\,0,\quad\beta\in\{0,1\}^m\\
&z_{\alpha 0}\,=\,\gamma_\alpha,\quad \alpha\in\N^n_{2d}\\
&z_{\alpha 1}\,=\,y_\alpha,\quad (\alpha,1)\in\N^{n+1}_{2d}.
\end{array}
\end{equation}
Each problem (\ref{compute}) is a semidefinite program\footnote{A semidefinite program is a convex optimization problem that can be solved efficiently, i.e.,
up to arbitrary fixed precision it can be solved in time polynomial in the input size of the problem; see e.g.
\cite{anjos}.}. Moreover, if (\ref{compute}) has a feasible solution then it has an optimal solution.
This is because as one minimizes the trace of $\M_d(\z)$, the feasible set is bounded and closed, hence compact. 

In (\ref{compute}) one may also include 
the additional constraints $z_{0k}<c$, $k\leq 2d$, for some fixed $c>0$. Then by Theorem \ref{thmain}, $\y$ has a representing Borel probability measure on $\K$ with a density in $L_\infty(\K)$
bounded by $c$, if and only if $\rho_d<\infty$ for all $d$.

Each semidefinite program of the hierarchy (\ref{compute}), $d\in\N$, has a dual which is also a semidefinite program 
and which reads:
\begin{equation}
\label{compute-dual}
\begin{array}{rl}
\rho^*_d=\displaystyle\max_{p,q,\sigma_j}&\displaystyle\int_\K p(\x)\lambda(d\x)+L_\y(q)\\
\mbox{s.t.}&\displaystyle\sum_{(\alpha,k)\in\N^{n+1}_d}(\x^\alpha t^k)^2-(p(\x)+tq(\x))\,=\,\sigma_0(\x,t)+\displaystyle\sum_{j=1}^m\sigma_j(\x,t)g_j(\x)\\
&{\rm deg}\,p\leq 2d;\:{\rm deg}\,q\leq 2d-1;\:\sigma_j\in\Sigma[\x,t]_{t-v_j},\:j=0,\ldots,m,
\end{array}
\end{equation}
where $v_0=0$. In particular, if $\y$ is the sequence of a Borel measure on $\K$ then in (\ref{compute-dual}) one may replace $L_\y(q)$ 
with $\displaystyle\int_\K q(\x)d\mu(\x)$.

\subsection{On membership in $L_p(\K)$}

An interesting feature of
the hierarchy of semidefinite programs (\ref{compute}), $d\in\N$, 
is that it can be used to 
detect if a given sequence $\y=(y_\alpha)$, $\alpha\in\N^n$, cannot have a representing
Borel measure on $\K$ with a density in $L_p(\K)$, $p>1$.
\begin{cor}
Let $\K\subset \R^n$ be as in (\ref{setk}) and let $\y=(\y_\alpha)$, $\alpha\in\N^n$, be a real sequence
with $y_0=1$.  If the semidefinite program (\ref{compute}) with $d\in\N$, has no solution 
then $\y$ cannot have a representing finite Borel measure on $\K$ with a density in
$L_{p}(\K)$, for any $p\geq 2d$.
\end{cor}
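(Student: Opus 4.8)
The plan is to argue by contraposition, recycling the $\mbox{(i)}\Rightarrow\mbox{(ii)}$ construction of Theorem \ref{thmain} but truncated at level $d$. Suppose $\y$ \emph{does} have a representing finite Borel measure $\mu$ on $\K$ with a density $f\in L_p(\K)$ for some $p\geq 2d$; since $y_0=1$, $\mu$ is in fact a probability measure and $d\mu=f\,\lambda(d\x)$ with $\int_\K f\,\lambda(d\x)=1$. I will produce a feasible point $\z$ for the semidefinite program (\ref{compute}) at level $d$. Because the remark following (\ref{compute}) guarantees that feasibility of (\ref{compute}) forces the existence of an optimal solution (the trace objective makes the sublevel sets compact), exhibiting such a $\z$ contradicts the hypothesis that (\ref{compute}) has \emph{no} solution, and the corollary follows.

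The first step is a degree count isolating exactly which moments the program at level $d$ sees. The moment matrix $\M_d(\z)$ is indexed by $\N^{n+1}_d$, so its entries $z_{\alpha+\beta}$ have $t$-degree at most $2d$, the maximum $2d$ being attained by the $(t^d,t^d)$ entry $z_{0,2d}$. Each localizing matrix $\M_{d-v_j}(g^\beta\,\z)$ carries even smaller $t$-degree, since every $g_j$ --- hence $g^\beta$ --- lies in $\R[\x]$ and contributes no power of $t$; its entries have $t$-degree at most $2(d-v_j)\leq 2d$. Thus (\ref{compute}) at level $d$ involves only the numbers $z_{\alpha k}$ with $k\leq 2d$. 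Since $\K$ is compact, hence of finite Lebesgue measure, the nesting $L_p(\K)\subset L_{2d}(\K)$ for $p\geq 2d$ gives $f\in L_{2d}(\K)$, and therefore
\begin{equation}
\label{cor-z}
z_{\alpha k}\,:=\,\int_\K\x^\alpha\,f(\x)^k\,\lambda(d\x),\qquad k\leq 2d,
\end{equation}
is a finite real number for every relevant $(\alpha,k)$, because $\x^\alpha$ is bounded on the compact set $\K$ and $f^k\in L_1(\K)$ for $k\leq 2d$.

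It remains to check that the $\z$ of (\ref{cor-z}) satisfies every constraint of (\ref{compute}). The two affine constraints are immediate: $z_{\alpha 0}=\int_\K\x^\alpha\,\lambda(d\x)=\gamma_\alpha$ and $z_{\alpha 1}=\int_\K\x^\alpha f\,\lambda(d\x)=\int_\K\x^\alpha\,d\mu=y_\alpha$. For the semidefinite constraints, take any $h\in\R[\x,t]_d$ with coefficient vector $\f$; then $\langle\f,\M_d(\z)\f\rangle=L_\z(h^2)=\int_\K h(\x,f(\x))^2\,\lambda(d\x)\geq0$, the integral being finite since $h^2$ has $t$-degree at most $2d$ and $f\in L_{2d}(\K)$. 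Likewise, for $h\in\R[\x,t]_{d-v_j}$ one gets $\langle\f,\M_{d-v_j}(g^\beta\,\z)\f\rangle=\int_\K h(\x,f(\x))^2\,g^\beta(\x)\,\lambda(d\x)\geq0$, because $g^\beta\geq0$ on $\K$. Hence $\z$ is feasible with finite trace objective, giving the desired contradiction.

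I expect the only genuinely delicate point to be the degree bookkeeping that pins the threshold at exactly $2d$: one must verify that no matrix appearing in (\ref{compute}) at level $d$ requires a moment of $t$-degree exceeding $2d$, so that $f\in L_{2d}(\K)$ --- supplied precisely by the hypothesis $p\geq 2d$ together with $L_p$-nesting on the finite measure space $\K$ --- is exactly enough to define every entry. The fact that the $g_j$ depend on $\x$ only is what keeps the localizing constraints from secretly demanding higher powers of $t$; everything else is a direct truncation of the argument already carried out for $\mbox{(i)}\Rightarrow\mbox{(ii)}$.
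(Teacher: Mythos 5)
Your proposal is correct and follows essentially the same route as the paper: argue by contraposition, use $L_p(\K)\subset L_{2d}(\K)$ (compactness of $\K$) to get $f\in L_{2d}(\K)$, define the truncated sequence $z_{\alpha k}=\int_\K\x^\alpha f(\x)^k\,\lambda(d\x)$ for $k\leq 2d$ (the paper obtains it as the moments of the measure $\nu$ from (\ref{nu})), and conclude that it is feasible for (\ref{compute}) at level $d$. Your explicit degree bookkeeping and verification of the positive semidefiniteness constraints merely spell out what the paper leaves implicit in the phrase ``is a feasible solution.''
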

\begin{proof}
Suppose that $\y$ has a representing measure on $\K$ with a density $f\in L_{2d}(\K)$,
and hence in $L_k(\K)$ for all $k\leq 2d$.
Proceeding as in the proof of Theorem \ref{thmain}, let $\nu$ be the Borel
measure on $\K\times\R$ defined in (\ref{nu}). Then from (\ref{z-alphak})
one obtains
\[z_{\alpha k}\,=\,\int_\K\x^\alpha\,f(\x)^k\,\lambda(d\x),\quad (\alpha,k)\in\N^{n+1}_{2d},\]
which is well-defined since $\K$ is compact 
(so that $\x^\alpha$ is bounded) and $k\leq 2d$. 
And so the sequence $\z=(z_{\alpha k})$, $(\alpha,k)\in\N^{n+1}_{2d}$,
is a feasible solution of (\ref{compute}) with $d$.
\end{proof}
Notice that again, the detection of absence of a density in $L_p(\K)$
is possible with no {\it \`a priori} bounding parameter $c$. But of course,
the condition is only sufficient.
\begin{ex}
{\rm Let $\K:=[0,1]$ and $s\in [0,1]$. Let $\lambda$ be the Lebesgue measure 
on $[0,1]$ and let $\delta_s$ be the Dirac measure at $s$.
One wants to detect that the Borel probability measure 
$\mu_a:=a\lambda+(1-a)\delta_s$, with $a\in (0,1)$ has no density
in $L_\infty(\K)$. Then (\ref{aux}) reads
\[z_{k0}\,=\,\frac{1}{k+1},\:k=0,1,\ldots;\quad z_{k1}
\,=\,\frac{a}{k+1}+(1-a)s^k,\:k=0,1,\ldots\]
The set $\K$ is defined by $\{x:g(x)\geq0\}$ with $x\mapsto g(x):=x(1-x)$.
We have tested the conditions $\M_d(\z)\succeq0$ and $\M_d(g\,\z)\succeq0$
along with (\ref{aux}) where $k\leq 2d$ (for $z_{0k}$) and
$k\leq 2d-1$ (for $z_{k1}$).

We have considered a Dirac at the points $s=k/10$, $k=1,\ldots,10$,
and with weights $a=1-k/10$, $k=1,\ldots,10$. To solve (\ref{compute})
we have used the GloptiPoly software of Henrion et al. \cite{gloptipoly} dedicated to solving there generalized problem of moments.
Results are displayed in 
Table \ref{tab1} which should be read as follows: 

$\bullet$ A column is parametrized by the 
number of moments involved in the conditions (\ref{aux}). For instance, Column ``10" refers to (\ref{aux})
with $d=10/2$, that is, the moment matrix $\M_d(\z)$ involves moments $z_{ij}$ with $i+j\leq 10$,
i.e., moments up to order $10$.

$\bullet$ Each row is indexed by the location of the Dirac $\delta_s$, $s\in [0,1]$
(with $\mu_a=a\lambda+(1-a)\delta_s$). The statement ``$1-a\geq 0.5$" in row ``$s=0.3$" and column ``10" means that
(\ref{aux}) is violated whenever $1-a\geq 0.5$, i.e., when the weight associated to the Dirac $\delta_s$ is 
larger than $0.5$.

One may see that no matter where the point $s$ is located in the interval $[0,1]$,
if its weight $1-a$ is above $0.5$ then
detection of impossibility of  a density in $L_\infty([0,1])$ occurs 
with moments up to order $10$. If its weight $1-a$ is only above $0.1$ then
detection of impossibility occurs with moments up to order $12$.
So even with a small weight on the Dirac $\delta_s$,  detection of impossibility does not
require moments of order larger than $12$.

\begin{table}
\label{tab1}
\begin{center}
\begin{tabular}{|c|c|c|c|c|}
\hline
$s\,\setminus\mbox{moments}$& 8 & 10 & 12&14\\
\hline
&&&&\\
0.0&$1-a\geq 0.3$ & $1-a\geq 0.1$ & $1-a\geq 0.1$ &$1-a\geq 0.1$\\
0.1&$1-a\geq 1$ & $1-a\geq 0.3$ & $1-a\geq 0.2$ &$1-a\geq 0.2$\\
0.2&$1-a\geq 1$ & $1-a\geq 0.3$ & $1-a\geq 0.1$ &$1-a\geq 0.1$\\
0.3&$1-a\geq 1$ & $1-a\geq 0.5$ & $1-a\geq 0.2$ &$1-a\geq 0.1$\\
0.4&$1-a\geq 1$ & $1-a\geq 0.5$ & $1-a\geq 0.2$ &$1-a\geq 0.1$\\
0.5&$1-a\geq 1$ & $1-a\geq 0.5$ & $1-a\geq 0.2$ &$1-a\geq 0.1$\\
0.6&$1-a\geq 1$ & $1-a\geq 0.5$ & $1-a\geq 0.2$ &$1-a\geq 0.1$\\
0.7&$1-a\geq 1$ & $1-a\geq 0.5$ & $1-a\geq 0.1$ &$1-a\geq 0.1$\\
0.8&$1-a\geq 1$ & $1-a\geq 0.5$ & $1-a\geq 0.1$ &$1-a\geq 0.1$\\
0.9&$1-a\geq 1$ & $1-a\geq 0.5$ & $1-a\neq 0.4,0.5$ &$1-a\geq 0.1$\\
1.0&$1-a\geq 0.4$ & $1-a\geq 0.1$ & $1-a\geq 0.1$ &$1-a\geq 0.1$\\
&&&&\\
\hline
\end{tabular}
\end{center}
\caption{Moments required for detection of failure; one Dirac}
\end{table}
}\end{ex}
\begin{ex}
\label{ex2}
{\rm Still with $\K=[0,1]$, consider now the case where $\mu_a=a\lambda +(1-a)(\delta_{s}+\delta_{s+0.1})/2$, 
that is, $\mu_a$ is a $(a,1-a)$ convex combination of the uniform probability distribution on $[0,1]$
with two Dirac measures at the points $s$ and $s+0.1$ of $[0,1]$, with equal weights. The results displayed in Table \ref{tab2}
are qualitatively very similar to the results in Table \ref{tab1} for
the case of one Dirac.
\begin{table}
\label{tab2}
\begin{center}
\begin{tabular}{|c|c|c|}
\hline
$(s,s+0.1)\,\setminus\mbox{moments}$& 10 & 12\\
\hline
&&\\
(0.1,0.2)&$1-a\geq 0.4$ & $1-a\geq 0.2$\\
(0.2,0.3)&$1-a\geq 0.6$ & $1-a\geq 0.2$\\
(0.3,0.4)&$1-a\geq 0.5$ & $1-a\geq 0.2$\\
(0.4,0.5)&$1-a\geq 0.6$ & $1-a\geq 0.1$\\
(0.5,0.6)&$1-a\geq 0.6$ & $1-a\geq 0.2$\\
(0.6,0.7)&$1-a\geq 0.7$ & $1-a\geq 0.2$\\
(0.7,0.8)&$1-a\geq 0.6$ & $1-a\geq 0.3$\\
(0.8,0.9)&$1-a\geq 0.5$ & $1-a\geq 0.2$\\
(0.9,1.0)&$1-a\geq 0.1$ & $1-a\geq 0.1$\\
&&\\
\hline
\end{tabular}
\end{center}
\caption{Moments required for detection of failure; two Dirac}
\end{table}

}\end{ex}


\begin{thebibliography}{las}
\bibitem{ahiezer}
N.I. Ahiezer and M. Krein. {\em Some Questions in the Theory of Moments}, Vol 2, Translations of Mathematical Monographs,
American Mathematical Society, Providence, Rhode Island, 1962
\bibitem{anjos}
M. Anjos and J.B. Lasserre. {\em Handbook on Semidefinite, Conic and Polynomial Optimization},
Springer, New York, 2010.
\bibitem{ash}
R.B. Ash. {\em Real Analysis and Probability}, Academic Press, Inc., Boston,1972.
\bibitem{Diaconis}
P. Diaconis and D. Freedman. The Markov moment problem and de Finetti's Theorem: Part I,
Math. Z. {\bf 247} (2004), pp. 183--199
\bibitem{gloptipoly}
D. Henrion, J.B. Lasserre and J. Lofberg .J. Gloptipoly 3: moments, optimization and semidefinite programming,
Optim. Methods and Softwares {\bf 24} (2009),   pp. 761--779
\bibitem{lasserre-book}
J.B. Lasserre. {\it Moments, Positive Polynomials and Their Applications},
Imperial College Press, London 2010.
\bibitem{marshall1}
M. Marshall. Cylinders with compact cross-section and the strip conjecture,
Seminaire de Structures Alg\'ebriques Ordonn\'ees,  Pr\'epublications {\bf 81} (2009), Universit\'e Paris 7, Paris.
\bibitem{marshall2}
M. Marshall. Polynomials non-negative on a strip, Proc. Amer. Math. Soc. {\bf 138} (2010), pp. 1559-1567
\bibitem{powers}
V. Powers. Positive polynomials and the moment problem for cylinders with compact cross-section,
J. Pure and Applied Alg. {\bf 188} (2004), pp. 217Ð226
\bibitem{putinar1}
M. Putinar. Extremal solutions of the two-dimensional L-problem of moments, I, J. Funct.
Anal. {\bf 136} (1996), pp. 331--364.
\bibitem{putinar2}
M. Putinar. Extremal solutions of the two-dimensional L-problems of moments, II,
J. Approx. Theory {\bf 92} (1998), pp. 38--58.
\bibitem{schmudgen}
K. Schm\"udgen. The K-moment problem for compact semi-algebraic sets, Math. Ann. {\bf 289}, pp. 203--206 (1991).
\end{thebibliography}
\end{document}